\documentclass[reqno, 12pt]{amsart}
\usepackage{amscd}
\usepackage{amssymb}

\usepackage{graphicx}
\usepackage{comment}
\usepackage[margin=2 in]{geometry}
\usepackage{setspace}
\usepackage{enumitem}

\usepackage[all]{xy}

\usepackage{color}

\allowdisplaybreaks[1]

\numberwithin{equation}{section}

\newcommand{\R}{\mathbb R}

\renewcommand{\a}{\alpha}

\font\hb=cmbx12

\newtheorem{thm}{Theorem}[section]

\newtheorem{lem}[thm]{Lemma}

\newcommand{\Dlm}{\mathfrak D_{{\lambda},\mu}}
\newcommand{\Zred}[1]{Z^{#1}_{\mathrm{red}}(\mathfrak{osp}(1|2),M)}
\newcommand{\Bred}[1]{B^{#1}_{\mathrm{red}}(\mathfrak{osp}(1|2),M)}

\def\R{\mathbb R}

\def\a{\mathfrak a}
\def\k{\mathfrak k}

\def\R{\mathbb R}

\def\a{\alpha}

\def\l{\lambda}

\def\k{\mathfrak{k}}

\def\R{{\mathbb R}}

\newcommand{\ZD}[1]{Z^{#1}(\mathfrak{osp}(1|2),\mathfrak D_{{\lambda},\mu})}
\newcommand{\BD}[1]{B^{#1}(\mathfrak{osp}(1|2),\mathfrak D_{{\lambda},\mu})}

\newcommand{\ZDif}[1]{Z^{#1}_{\mathrm{diff}}(\mathfrak{osp}(1|2),\mathfrak D_{{\lambda},\mu})}
\newcommand{\BDif}[1]{B^{#1}_{\mathrm{diff}}(\mathfrak{osp}(1|2),\mathfrak D_{{\lambda},\mu})}

\newcommand{\ZM}[1]{Z^{#1}(\mathfrak{osp}(1|2),M)}
\newcommand{\BM}[1]{B^{#1}(\mathfrak{osp}(1|2),M)}

\newcommand{\SHD}[1]{\mathrm{H}^{#1}(\mathfrak{osp}((1|2),\mathfrak D_{{\lambda},\mu})}

\newcommand{\SHDif}[1]{\mathrm{H}^{#1}_{\mathrm{diff}}(\mathfrak{osp}((1|2),\mathfrak D_{{\lambda},\mu})}

\newcommand{\SHM}[1]{\mathrm{H}^{#1}(\mathfrak{osp}((1|2),M)}

\def\L{{\cal   L}}

\def\hb#1{\hbox{#1}}

\def\ker{\hb{ker}}

\def\hb #1{\hbox{#1}}

\def\hb#1{\hbox{#1}}

\def\L1#1{L^1(#1)}

\def\lef({\left(}
\def\rig){\right)}

\begin{document}

\title[$\mathfrak{osp}(1|2)$-cohomology]{ $\mathfrak{osp}(1|2)$-cohomology on multidifferential operators space}

\begin{abstract}
We compute cohomology spaces of the
ortosymplectic Lie superalgebra $\mathfrak{osp}(1|2)$ on the
(1,1)-dimensional real superspace with coefficients in the superspace
$\frak{D}_{{\lambda},\mu}$ of $n$-ary differential operators
acting on weighted densities (${\lambda}\in\mathbb{R}^n$ and $\mu\in\mathbb{R}$).  Furthermore, we provide explicit expressions for cocycles generating these spaces.

\end{abstract}

\author[Ben Ammar]{Mabrouk Ben Ammar}
\address{Mabrouk Ben Ammar\\
 Universit\'e de Sfax, Facult\'e des Sciences, D\'epartement de Math\'ematiques\\
  Laboratoire d'Alg\`ebre, G\'eom\'etrie et Th\'eorie
 Spectrale (AGTS) LR11ES53, BP 802, 3038 Sfax, Tunisie.} \email{mabrouk.benammar@gmail.com}

 \author[Jabeur]{Amina Jabeur}
  \address{Amina Jabeur\\
 Universit\'e de Sfax, Facult\'e des Sciences, D\'epartement de Math\'ematiques\\
  Laboratoire d'Alg\`ebre, G\'eom\'etrie et Th\'eorie
 Spectrale (AGTS) LR11ES53, BP 802, 3038 Sfax, Tunisie.} \email{jabeuramina@yahoo.fr}

\keywords{Cohomology, Lie superalgebra}

\subjclass{17B56, 17B10, 17B66}

\date{07/10/26}

\maketitle

\section{Introduction}

The space of weighted densities of weight $\l$ on
$\mathbb{R}$ (or $\l$-densities for short), denoted by:
\begin{equation*}\mathcal{F}_\l=\left\{ fdx^{\l}, ~f\in
C^\infty(\mathbb{R})\right\},\quad \l\in\mathbb{R},
\end{equation*}is the space of
sections of the line bundle $(T^*\mathbb{R})^{\otimes^\l}.$ The
Lie algebra ${\rm Vect}(\mathbb{R})$ of vector fields $X_h=h{d\over
dx}$, where $h\in C^\infty(\mathbb{R})$, acts by the {\it Lie
derivative}. Alternatively, this action can be written as follows:
\begin{equation}\label{Lie1}L_{X_h}^\l(fdx^{\l})=(hf'+\l h'f)dx^{\l},
\end{equation}
where $f'$, $h'$ are $\frac{df}{dx}$, $\frac{dh}{dx}$. \\

For ${\lambda}=(\lambda_1,\dots,\lambda_n)\in\mathbb{R}^n$ and $\mu\in\mathbb{R}$ we denote by $\mathcal{D}_{{\lambda},\mu}$ the space of $n$-ary differential operators $A$ from $\mathcal{F}_{\l_1}\otimes\cdots\otimes\mathcal{F}_{\l_n}$ to $\mathcal{F}_\mu$. The Lie algebra ${\rm Vect}(\mathbb{R})$ acts
on the space $\mathcal{D}_{{\lambda},\mu}$ of these differential
operators by:
\begin{equation}\label{Lieder2}X_h\cdot A=L_{X_h}^\mu\circ
A-A\circ L_{X_h}^{{\lambda}} \end{equation} where
$L_{X_h}^{{\lambda}}$ is the Lie derivative on
$\mathcal{F}_{\l_1}\otimes\cdots\otimes\mathcal{F}_{\l_n}$  defined by the Leibnitz rule.\\

Restricting this action to the Lie algebra $\mathfrak{sl}(2)$---which is isomorphic to the Lie subalgebra of $\mathrm{Vect}(\mathbb{R})$ spanned by $\left\{X_1,\,X_x,\,X_{x^2}\right\}$---yields a family of infinite-dimensional $\mathfrak{sl}(2)$-modules, still denoted by $\mathcal{D}_{\lambda,\mu}$.

Now, let us consider the superspace $\mathbb{R}^{1|1}$ endowed with its standard contact
structure defined by the 1-form $\omega$, and the Lie superalgebra
$\mathcal{K}(1)$ of contact vector fields on
$\mathbb{R}^{1|1}$. For $\l\in\mathbb{R}$ we introduce the $\mathcal{K}(1)$-modules
$\mathfrak{F}_\lambda$ of $\lambda$-densities on
$\mathbb{R}^{1|1}$ and, for $\l=(\l_1,\dots,\l_n)\in\mathbb{R}^n$ we consider  the $\mathcal{K}(1)$-modules $\mathfrak{D}_{\lambda,\mu}$ of $n$-ary differential operators from $\mathfrak{F}_{\l_1}\otimes\cdots\otimes\mathfrak{F}_{\l_n}$ to $\mathfrak{F}_\mu$. These super spaces $\mathfrak{F}_\lambda$ and $\mathfrak{D}_{\lambda,\mu}$ are super
analogues of the spaces $\mathcal{F}_\lambda$ and
$\mathcal{D}_{\lambda,\mu}$, respectively.\\

According to Nijenhuis and Richardson \cite{r1}, the space $\mathrm{H}^1\left(\mathfrak{g};\mathrm{End}(V)\right)$ classifies the infinitesimal deformations of a $\mathfrak{g}$-module $V$, while the obstructions to the integrability of a given deformation belong to $\mathrm{H}^2\left(\mathfrak{g};\mathrm{End}(V)\right)$.

The computation of these cohomology spaces has a rich history in both the classical and super settings:

\begin{itemize}
  \item [1.] For $\lambda\in\mathbb{R}$, the spaces $\mathrm{H}^1_\mathrm{diff}(\mathrm{Vect}(\mathbb{R}), \mathcal{D}_{\lambda,\mu})$ were computed by Feigin and Fuchs \cite{ff}, while their super analogues $\mathrm{H}^1_\mathrm{diff}\left(\mathcal{K}(1),\mathfrak{D}_{\lambda,\mu}\right)$ were determined by Basdouri  and his co-authers \cite{bbbbk}. The spaces $\mathrm{H}^1_\mathrm{diff}\left(\mathfrak{sl}(2), \mathcal{D}_{\lambda,\mu}\right)$ were computed by Gargoubi \cite{g} and Lecomte \cite{lec}, whereas the super analogues $\mathrm{H}^1_\mathrm{diff}\left(\mathfrak{osp}(1|2),\mathfrak{D}_{\lambda,\mu}\right)$ were obtained by Basdouri and Ben Ammar \cite{bab}. The $\mathfrak{sl}(2)$-relative cohomology spaces $\mathrm{H}^1_\mathrm{diff}(\mathrm{Vect}(\mathbb{R}),\mathfrak{sl}(2); \mathcal{D}_{\lambda,\mu})$ were investigated by Bouarroudj and Ovsienko \cite{bo}, and the super analogue spaces $\mathrm{H}^1_\mathrm{diff}(\mathcal{K}(1),\mathfrak{osp}(1|2); \mathfrak{D}_{\lambda,\mu})$ were established by Basdouri and his co-authers \cite{bbbbk}.
  \item [2.]  For $\lambda\in \mathbb{R}^2$, the spaces $\mathrm{H}^2_\mathrm{diff}(\mathfrak{sl}(2),\mathcal{D}_{\lambda,\mu})$ were computed by I. Basdouri et al. \cite{bhm}, and the spaces $\mathrm{H}^1_\mathrm{diff}\left(\mathfrak{sl}(2),\mathcal{D}_{\lambda,\mu}\right)$ by Bouarroudj \cite{b}.
  \item [3.] For ${\lambda}\in \mathbb{R}^3$, the spaces $\mathrm{H}^1_\mathrm{diff}(\mathfrak{sl}(2),\mathcal{D}_{\lambda,\mu})$ were studied by O. Basdouri and E. Nasri \cite{be}.
  \item [4.] 
For $\lambda\in\mathbb{R}^n$ with arbitrary $n$, the spaces $\mathrm{H}^1_\mathrm{diff}(\mathfrak{sl}(2), \mathcal{D}_{\lambda,\mu})$ were studied by M. Ben Ammar and R. Sidaoui \cite{bs}, and the higher cohomology spaces $\mathrm{H}^j_\mathrm{diff}(\mathfrak{sl}(2), \mathcal{D}_{\lambda,\mu})$ for any $j\geq0$ were fully characterized by D. Arnal and M. Ben Ammar \cite{aba}.
\end{itemize}

In this paper, we investigate the super analogue of the work of D. Arnal and M. Ben Ammar \cite{aba}. Specifically, we study the spaces $\mathrm{H}^j_\mathrm{diff}\left(\mathfrak{osp}(1|2),\mathfrak{D}_{\lambda,\mu}\right)$ for any $j\geq0$ and $\lambda\in\mathbb{R}^n$. Furthermore, we provide explicit expressions for the cocycles generating these spaces.

\section{Definitions and Notations}
\subsection{The Lie superalgebra of contact vector fields on $\mathbb{R}^{1|1}$}
We define the superspace $\mathbb{R}^{1|1}$ in terms of its
superalgebra of functions, denoted by $C^\infty(\mathbb{R}^{1|1})$
which consists of elements of the form:
\begin{equation*}
F(x,\theta)=f_0(x)+f_1(x)\theta ,
\end{equation*}
where $x$ is the even variable,  $\theta$ is  the odd variable
(satisfying $\theta^2=0$), and $f_0, f_1 \in C^\infty(\mathbb{R})$. The contact bracket on $C^\infty(\mathbb{R}^{1|1})$ 
is defined by:
\begin{equation}\label{e21}
\{F,G\}=FG'-F'G+\tfrac12D(F)
\overline{D}(G),
\end{equation}
where
\begin{equation}\label{e22}
D=\partial_\theta+\theta\partial_x\quad\text{and}\quad\overline{D}=\partial_\theta-\theta\partial_x.
\end{equation}
It follows that
\begin{equation}\label{e23}
\partial_x=D^2\quad\text{and}\quad \partial_\theta=D-\theta D^2.
\end{equation}
\\

The standard contact structure on the superspace $\mathbb{R}^{1|1}$ is given by the $1$-form
\begin{equation*}
\omega=dx+\theta d\theta.
\end{equation*}
Let $\mathrm{Vect}(\mathbb{R}^{1|1})$ be the superspace of
 vector fields on $\mathbb{R}^{1|1}$:
\begin{equation*}\mathrm{Vect}(\mathbb{R}^{1|1})=\left\{F_0\partial_x
+  F_1\partial_\theta \mid ~F_i\in C^\infty(
\mathbb{R}^{1|1})\right\}.\end{equation*}   The subalgebra $\mathcal{K}(1)$ of contact vector fields on $\mathbb{R}^{1|1}$ is defined 
as the space of vector fields preserving the distribution spanned by the distribution
$\langle\overline{D}\rangle$:
$$
\mathcal{K}(1)=\big\{X\in\mathrm{Vect}(\mathbb{R}^{1|1})~|~[X,\,\overline{D}]=
F_X\overline{D}\quad\hbox{for some}~F_X\in
C^\infty(\mathbb{R}^{1|1})\big\}. $$
Every contact vector field on $\mathbb{R}^{1|1}$ can be written explicitly in the form:
\begin{equation*}
X_F=F\partial_x+\tfrac12 D(F)\overline{D},\;\text{ where }\, F\in
C^\infty\left(\mathbb{R}^{1|1}\right).
\end{equation*}
The Lie bracket on $\mathcal{K}(1)$ satisfies
\begin{equation*}
[X_{F},\,X_{G}]=X_{\{F,\,G\}}.
\end{equation*}
Thus, the map  $F\mapsto X_F$ is a Lie superalgebra isomorphism between $\mathcal{K}(1)$ and
$C^\infty\left(\mathbb{R}^{1|1}\right)$.

\subsection{The subalgebra $\mathfrak{osp}(1|2)$}

The Lie algebra $\frak {sl}(2)$ is realized as subalgebra of the
Lie algebra  $\mathrm{Vect}(\mathbb{R})$ as follows:
\begin{equation*}\frak {sl}(2)=\text{Span}(X_1,\,X_{x},\,X_{x^2}).\end{equation*}
Similarly,we consider the orthosymplectic Lie superalgebra $\mathfrak{osp}(1|2)$ as
the subalgebra of $\mathcal{K}(1)$ defined by:
\begin{equation*}
\mathfrak{osp}(1|2)=\text{Span}(X_1,\,X_{x},\,X_{x^2},\,X_{x\theta},\,
X_{\theta}).
\end{equation*}
The space of even elements is isomorphic to $\mathfrak{sl}(2)$, while the space of odd elements is two dimensional:
\begin{equation*}
(\mathfrak{osp}(1|2))_{\bar1}=\text{Span}(X_{x\theta},\,X_{\theta}).
\end{equation*}
The commutation and anticommutation relations involving the odd elements are
\begin{equation*}
\begin{array}{llll}
&[X_{x^2},X_\theta]~=-X_{x\theta},~~&[X_x,X_\theta]~=-\frac{1}{2}
X_\theta,~~&[X_1,X_\theta]=0,\cr
&[X_{x^2},X_{x\theta}]=0,~~&[X_x,X_{x\theta}]=\frac{1}{2}
X_{x\theta},~~&[X_1,X_{x\theta}]=X_\theta,\cr
&[X_{x\theta},X_{\theta}]~=\frac{1}{2}
X_{x},~~&[X_{\theta},X_{\theta}]~=\frac{1}{2}
X_{1},~~&[X_{x\theta},X_{x\theta}]=\frac{1}{2} X_{x^2}.
\end{array}
\end{equation*}
Consequently, the Lie superalgebra $\mathfrak{osp}(1|2)$ is generated by its odd part.

\subsection{The space of weighted densities on $\mathbb{R}^{1|1}$}

We have analogous definition of weighted
densities in super setting (see \cite{ab}) with $dx$ replaced by
$\omega$. The elements of these spaces are indeed (weighted)
densities since all spaces of generalized tensor fields have just
single parameter relative $\mathcal{K}(1)$
--- namely the value of $X_x$ on the lowest weight vector (the one
annihilated by $X_\theta$). From this perspective the volume
element (heuristically, $\lq\lq
dx\frac{\partial}{\partial\theta}"$) is indistinguishable from
$\omega^{\frac{1}{2}}.$ We
denote by $\mathfrak{F}_{\lambda}$ the space of all
 weighted densities on $\mathbb{R}^{1|1}$ of weight $\lambda\in\mathbb{R}$:
\begin{equation*}
\mathfrak{F}_\lambda=\left\{F(x,\theta)\omega^\lambda~~|~~F(x,\theta)
\in C^\infty(\mathbb{R}^{1|1})\right\}.
\end{equation*}
\\

The subspace $\left\{X_f=f\partial_x+\frac{1}{2}f'\theta\partial\theta \;\middle|\; f\in C^\infty(\mathbb{R})\right\}$ is a subalgebra of $\mathcal{K}(1)$ isomorphic to $\mathrm{Vect}(\mathbb{R})$. It is straightforward to see that, as $\mathrm{Vect}(\mathbb{R})$-modules, we have:
\[
\mathfrak{F}_\lambda \simeq \mathcal{F}_\lambda \oplus \Pi(\mathcal{F}_{\lambda+\frac12}),
\]
where $\Pi$ denotes the parity reversal operator.

As a vector space, $\mathfrak{F}_\lambda$ is isomorphic to
$C^\infty(\mathbb{R}^{1|1})$.However, the Lie derivative of a density $G\omega^\lambda$ along a vector field $X_F$ in $\mathcal{K}(1)$ is given by:
\begin{equation}\label{e24}
\mathfrak{L}^\lambda_{X_F}(G\,\omega^\lambda)=(X_F(G)+\lambda F'G)\omega^\lambda.
\end{equation}
Of course, for all $\lambda$, $\mathfrak{F}_\lambda$ is a
$\mathcal{K}(1)$-module:
\begin{equation*}
[\mathfrak{L}^{\lambda}_{X_F},\mathfrak{L}^{\lambda}_{X_G}] =
\mathfrak{L}^{\lambda}_{[{X_F},\,X_G]}.
\end{equation*}

\subsection{Differential operators on weighted densities}

According to \eqref{e23} any differential operator $A$ on $\mathbb{R}^{1|1}$ is an operator on
$C^\infty(\mathbb{R}^{1|1})$ which can be written in the following form:
$$
A= \sum_{i=0}^\ell{a}_i (x, \theta)D^i,\quad {a}_i (x, \theta)\in C^\infty(\mathbb{R}^{1|1}).
$$
Naturally, any differential operator defines a linear mapping from $\mathfrak{F}_\lambda$ to $\mathfrak{F}_\mu$ for any $\lambda, \mu \in \mathbb{R}$ via $F\omega^\lambda \mapsto A(F)\omega^\mu$. 

More generally, if $\l=(\l_1,\ldots,\l_n)$ is in $\R^n$, and $\mu$ in $\R$, we denote by $\Dlm$ the space of $n$-ary differential operators $A$ from $\mathfrak{F}_{\l_1}\otimes\cdots\otimes\mathfrak{F}_{\l_n}$ to $\mathfrak{F}_\mu$. These spaces are $\mathcal{K}(1)$- and $\mathfrak{osp}(1|2)$-modules for the natural action:
\begin{equation*}\label{d-action}
{X_F}\cdot A=\mathfrak{L}^\mu_{X_F}\circ A-
(-1)^{AF}A\circ \mathfrak{L}^{\lambda}_{X_F},
\end{equation*}
where ${\frak
L}_{X_F}^{\lambda}$ is the Lie derivative on ${\frak
F}_{\lambda_1}\otimes\cdots\otimes\frak{F}_{\lambda_n}$ defined by the Leibniz rule.\\

The space $\mathfrak{D}_{\lambda,\mu}$  of multi-differential operators is spanned, as a $C^\infty(\mathbb{R})$-module,  by the operators $\Omega^\alpha$, $\theta\partial_\theta\Omega^\alpha$,
$\theta\Omega^\alpha$ and $\partial_\theta\Omega^\alpha$, where $\alpha = (\alpha_1, \dots, \alpha_n) \in \mathbb{N}^n$ and $\Omega^\alpha$ is the operator defined by:

 $$
\Omega^\alpha(f_1\omega^{\lambda_1}\otimes\cdots\otimes f_n\omega^{\lambda_n})={1\over\alpha!}f_1^{(\alpha_1)}\dots f_n^{(\alpha_n)}\omega^{\mu}.
$$
Therefore, any differential operator $A\in\Dlm$ can be written in the following form:
$$
A=\sum_\alpha a_\alpha(x,\theta,\partial_\theta)\Omega^\alpha.
$$

\section{Cohomology}

We shall compute the cohomology spaces of the Lie superalgebra $\frak {osp}(1|2)$
with coefficients in $\frak{D}_{{\lambda},\mu}$, where ${\lambda}\in\mathbb{R}^n$ and $\mu\in\mathbb{R}$. First, let us recall some fundamental concepts from cohomology theory~(see, e.g.,
\cite{Fu}). Let $\frak{g}$
be a Lie superalgebra acting on a superspace $V$.
The space of $j$-cochains of $\frak{g}$ with
values in $V$ is the $\frak{g}$-module
\begin{equation*}
C^j(\frak{g}, V ) := \mathrm{Hom}(\Lambda^j(\frak{g}),V)\quad\text{for}\quad j\geq1,
\end{equation*}
and 
$$
C^0(\frak{g}, V) =V.
$$
For $j\geq1$ the {\it coboundary operator} $\delta=\delta_j: C^j(\frak{g}, V
)\rightarrow C^{j+1}(\frak{g}, V )$ is defined by 
$$\aligned
(\delta f)(u_0,\dots,u_j)=&\sum_{i=0}^j(-1)^{\varphi_i} u_i f(u_0,\dots,\hat{\imath},\dots,u_j)\\
&+ \sum_{0\leq i<i'\leq j}(-1)^{\varphi_{i,i'}}f([u_i,u_i'],u_0,\dots,\hat{\imath},\dots,\hat{\jmath},\dots, u_j).
\endaligned
$$
where 
$$
\varphi_i={i+u_i(f+u_0+\cdots +u_{i-1})}
$$
and
$$
 \varphi_{i,i'}={i+i'+u_i(u_0+\dots+u_{i-1})+u_i'(u_0+\cdots+\hat{\imath}+\cdots+u_{i'-1})}.
$$
For $j=0$ the operator $\delta$ is defined on $V$ by $$(\delta f)(u_0) : =(-1)^{u_0f} u_0\cdot f,\quad u_0\in \frak{g},\quad f\in V.$$
Since $\delta^2=0$, the kernel of $\delta_j$ denoted by $Z^j(\mathfrak{g},V)$ (the space of $j$-cocycles) contains the image $B^j(\mathfrak{g},V)$ of
$\delta_{j-1}$ (the space of $j$-coboundaries).\\

The $j^{th}$ cohomology space is the quotient space
\begin{equation*}
\mathrm{H}^j
(\mathfrak{g},V):=Z^j(\mathfrak{g},V)/B^j(\mathfrak{g},V).
\end{equation*}
In this paper we aim to study the cohomology spaces $\SHD{j}$ where $\l=(\l_1,\dots,\l_n)\in\R^n$and $\mu\in\R$, for any $j\geq0.$ Before proceeding, let us recall that more restricted notions of cohomology have been considered in the literature. First, in this context, authors often restrict themselves to differential cohomology (considering only differential cochains), which defines the differential cohomology spaces denoted by $\mathrm{H}^j_{\mathrm{diff}} (\mathfrak{osp}(1|2),\mathfrak{D}_{{\lambda},\mu})$. This notion was used for instance in \cite{bbbbk,be,b}.  Moreover, for technical reasons, it is sometimes useful to restrict the analysis to polynomial functions rather than $C^\infty$ functions. Specifically, we aim to work in a space where the action of \(H=X_x\) is diagonalizable; this property holds for the algebra of polynomial functions, but not for  $C^\infty$ functions.
However, it was proved in \cite{aba} that all these types of cohomology spaces coincide in the classical case. That is, if $M$ denotes the submodule of $\mathcal D_{\lambda,\mu}$ consisting of polynomial operators, then:
 $$
\mathrm{H}^j(\mathfrak{sl}(2),\mathcal D_{\lambda,\mu})=\mathrm{H}^j_\mathrm{diff}(\mathfrak{sl}(2),\mathcal D_{\lambda,\mu})=\mathrm{H}^j(\mathfrak{sl}(2),M).
$$
 In this work, we show that these results remain valid in the super setting. More precisely, we establish the following theorem:
\begin{thm}\label{scohomology}
For any $j$, we have:
$$
\SHD{j}=\SHDif{j}=\SHM{j},
$$
where $M$ is the submodule of $\mathfrak{D}_{{\lambda},\mu}$ consisting of the polynomial operators and $\mathrm{H}^{j}_\mathrm{diff}$ denotes the differential cohomology.
Moreover, up to a coboundary, any $j$-cocycle
$c\in \mathrm{Z}^j(\mathfrak{osp}(1|2),\mathfrak{D}_{\lambda,\mu})$ is reduced, i.e.\ for any $u_2,\dots,u_j$ in $\mathfrak{osp}(1|2)$, $c(X_1,u_2,\dots,u_j)=0$.
\end{thm}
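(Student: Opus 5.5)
We follow the strategy of \cite{aba}, adapted to the super setting. The first step is to reduce any cocycle by killing its value on $X_1$. The element $X_1=\partial_x$ acts on $\Dlm$ through $A=\sum_\alpha a_\alpha(x,\theta,\partial_\theta)\Omega^\alpha\mapsto \sum_\alpha \partial_x a_\alpha\,\Omega^\alpha$, since $X_1$ commutes with every $\Omega^\alpha$ and with $\theta,\partial_\theta$. This action is surjective, with a homotopy given by integration in $x$: put $(IA)=\sum_\alpha\big(\int_0^x a_\alpha\big)\Omega^\alpha$, so that $X_1\cdot IA=A$.

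Given a $j$-cocycle $c$, we define a $(j-1)$-cochain $b(u_2,\dots,u_j)=I\big(c(X_1,u_2,\dots,u_j)\big)$. We then show that $c-\delta b$ vanishes whenever $X_1$ is an argument. The cleanest route is Cartan's formula $L_{X_1}=\delta\iota_{X_1}+\iota_{X_1}\delta$ on cochains. Consider a cochain $c'=c-\delta b$ with $\iota_{X_1}c'=\iota_{X_1}c-\iota_{X_1}\delta b$. Cartan gives $\iota_{X_1}\delta b=L_{X_1}b-\delta\iota_{X_1}b$. We may first replace $b$ by $\iota_{X_1}$-free choices, taking $b$ to vanish on $X_1$ by choosing it on a complement. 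Since $\mathrm{ad}X_1$ is nilpotent on $\mathfrak{osp}(1|2)$, $L_{X_1}$ applied to $I$-type cochains is the identity plus a nilpotent correction coming from $[X_1,u_i]$. Hence an inductive or Neumann-series correction along the nilpotent filtration of $\mathrm{ad}X_1$ ($X_{x^2}\to X_x\to X_1$, $X_{x\theta}\to X_\theta$) produces $b$ with $\iota_{X_1}(c-\delta b)=0$. Concretely, we solve $L_{X_1}b'=\iota_{X_1}c$ with $\iota_{X_1}b'=0$, using that $L_{X_1}=X_1\cdot{}+N$ with $N$ nilpotent and $X_1\cdot{}$ having right inverse $I$. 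We then use $\delta\iota_{X_1}c=L_{X_1}c=\iota_{X_1}\delta c+\delta\iota_{X_1}c$, together with the fact that $\iota_{X_1}$ commutes with $L_{X_1}$, to check $\iota_{X_1}(c-\delta b')=0$. This gives the reduction statement. Moreover, if $c$ is differential or polynomial, then so is $b'$, because $I$ preserves both classes. A polynomial remark is needed here: $\int_0^x$ preserves polynomials and differential operators.

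The second step analyses reduced cocycles. A reduced cocycle $c$ satisfies $L_{X_1}c=0$, so it is $x$-invariant in the appropriate sense. Expanding $c(X_1,\cdot)=0$ in the cocycle identity gives $X_1\cdot c(u_2,\dots,u_{j+1})=\sum\pm c(\dots,[X_1,u_i],\dots)$. Evaluated on the ordered basis, this is a triangular ODE system in $x$. Hence the coefficients $a_\alpha$ of $c$ on the remaining generators $X_x,X_{x^2},X_\theta,X_{x\theta}$ are polynomial in $x$, determined by their values on the arguments of top $\mathrm{ad}X_1$-degree. So every reduced cocycle, and likewise every reduced cochain $b$ with $\delta b$ reduced, has polynomial coefficients: it lies in $C^j(\mathfrak{osp}(1|2),M)$ and is automatically differential. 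The equivalence $\SHD{j}=\SHDif{j}=\SHM{j}$ then follows. The maps induced by the inclusions $M\subset$ differential cochains $\subset$ all cochains are surjective, because every class has a reduced, hence polynomial, representative. They are injective: if a polynomial reduced cocycle equals $\delta b$ with $b$ arbitrary, we apply the reduction to $b$ itself, modifying $b$ by a coboundary so that $\iota_{X_1}b=0$. Then $\delta b$ reduced forces $L_{X_1}b=0$, so $b$ is polynomial by the same argument.

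The main obstacle is the bookkeeping in the first step: sign conventions for odd $u_i$ in Cartan's formula, and checking that the nilpotent correction terminates while preserving $\iota_{X_1}=0$. We would handle this by working degree by degree in the grading given by eigenvalues of $\mathrm{ad}X_x$ on $\mathfrak{osp}(1|2)$, which makes $N$ strictly degree-lowering.
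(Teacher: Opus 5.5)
Your argument is correct, but it is organized differently from the paper's.

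\textbf{The paper's route.} The paper proves Theorem~\ref{scohomology} via Lemmas~\ref{cocycles} and~\ref{coboundaries}, degree by degree.
For $j=1,2,3$ it writes out by hand the triangular system for the correcting cochain along the $\mathrm{ad}X_1$-chains, e.g.\ $X_1\cdot b(X_{x^2})=a(X_1,X_{x^2})+2b(X_x)$ and $X_1\cdot b(X_{x\theta})=a(X_1,X_{x\theta})-b(X_\theta)$.
It then extracts $X_1^k\cdot c(\cdots)=0$ from particular instances of $\delta c(X_1,\cdot,\cdot)=0$.
For $j>3$ it simply asserts that there are no nonzero cochains.

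\textbf{Your route.} You replace all of this by the identity $L_{X_1}=\delta\iota_{X_1}+\iota_{X_1}\delta$ and one observation.
On cochains with $\iota_{X_1}e=0$ one has $L_{X_1}=X_1\cdot{}-N$, where $N$ is induced by $\mathrm{ad}X_1$ on the super exterior algebra of $\mathfrak{osp}(1|2)/\mathbb{R}X_1$.
This $N$ is nilpotent and commutes with $X_1\cdot{}$ and with $I$.
Hence the finite sum $\sum_{k\ge0}I^{k+1}N^k$ is a right inverse of $L_{X_1}$ on that space, and it preserves polynomial and differential cochains.
Polynomiality of reduced cocycles then follows from $X_1^k\cdot c=N^kc=0$.

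\textbf{What each buys.} Your route is uniform in $j$, and that matters here.
In the super setting $\Lambda^j\mathfrak{osp}(1|2)$ contains $S^j(\mathrm{Span}(X_\theta,X_{x\theta}))\neq0$ for every $j$, so the paper's dismissal of $j>3$ is not available.
Likewise, its $j=3$ step only treats determinant-type components modeled on $\mathfrak{sl}(2)$ and does not address components such as $c(X_\theta,X_\theta,X_\theta)$.
Your homotopy argument handles all degrees and parities at once.
The paper's route, in exchange, gives explicit low-degree formulas for the correcting cochains.

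\textbf{Points to tighten.}
\begin{enumerate}
\item Drop the preliminary choice $b=I(\iota_{X_1}c)$; it does not suffice on its own, and the $N$-correction is essential.
\item State explicitly that the reduction never uses $\delta c=0$. Solving $L_{X_1}e=\iota_{X_1}b$ with $\iota_{X_1}e=0$ only needs $\iota_{X_1}\iota_{X_1}b=0$, which is automatic since $X_1$ is even. This is exactly what legitimizes ``applying the reduction to $b$ itself'' in your injectivity step.
\item In that injectivity step, first reduce an arbitrary polynomial cocycle inside $M$, which is possible because $I$ preserves polynomials, before invoking the argument for reduced ones.
\end{enumerate}
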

 First, note that it is also possible to restrict the $\mathfrak{osp}(1|2)$-action to the submodule $M$ of $n$-differential operators with polynomial coefficients:
$$
M=\left\{A=\sum_\alpha a_\alpha(x,\theta,\partial_\theta)\Omega^\alpha,\quad a_\alpha(x,\theta,\partial_\theta) \text{ a polynomial functions}\right\}.
$$
By construction, $M$ is a submodule of $\Dlm$. Following \cite{DMB}, we can restrict our attention to cochains with values in $M$ and introduce the corresponding cohomolgy.\\

Furthermore, we say that  a $j$-cochain $c:\wedge^j\mathfrak{osp}(1|2)\rightarrow M$ is reduced if, for any $u_2,\dots,u_j$ in $\mathfrak{osp}(1|2)$, $c(X_1,u_2,\dots,u_j)=0$.
Let $\Zred{j}$ and $\Bred{j}$ denote the spaces of reduced $j$-cocycles and $j$-coboundaries, respectively.\\

The proof of Theorem \ref{scohomology} follows from the two lemmas below.

\begin{lem}\label{cocycles}
For each $j$, we have:
$$\aligned
\ZD{j}&=\Zred{j}+\BD{j}\\
\ZDif{j}&=\Zred{j}+\BDif{j}\\
\ZM{j}&=\Zred{j}+\BM{j}.
\endaligned
$$
\end{lem}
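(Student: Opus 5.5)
The plan is to imitate the classical argument of \cite{aba}, using the fact that $X_1$ acts on every coefficient space by a locally nilpotent operator: on $M$ it is essentially $\partial_x$ applied to the polynomial coefficients $a_\alpha(x,\theta,\partial_\theta)$. On $\Dlm$ it acts by $\partial_x$ on $C^\infty$ coefficients, which is surjective. Since $\mathfrak{L}_{X_1}$ is just $\partial_x$ on every $\mathfrak{F}_{\l_i}$ and on $\mathfrak{F}_\mu$, we get $X_1\cdot A=\sum_\alpha a_\alpha'\,\Omega^\alpha$. The first step is therefore to construct a right inverse $I$ of $A\mapsto X_1\cdot A$ that preserves each of the three classes (arbitrary, differential, polynomial). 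Concretely, $I$ integrates each coefficient in $x$ from $0$, i.e.\ $I(\sum a_\alpha\Omega^\alpha)=\sum(\int_0^x a_\alpha)\Omega^\alpha$. This clearly maps polynomial operators to polynomial operators, and differential cochains to differential cochains, since the $x$-integration acts linearly on the coefficients of the symbol of the cochain.

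The second step is the standard homotopy. Since $X_1$ is even, the Cartan formula $L_{X_1}=\delta\,\iota_{X_1}+\iota_{X_1}\delta$ holds on cochains, where $(L_{X_1}c)(u_1,\dots,u_j)=X_1\cdot c(u_1,\dots,u_j)-\sum_i c(u_1,\dots,[X_1,u_i],\dots,u_j)$. Given a cocycle $c$, we want a cochain $b$ with $(c-\delta b)(X_1,\cdot)=0$. We proceed by a filtration argument. The operator $\mathrm{ad}\,X_1$ is nilpotent on $\mathfrak{osp}(1|2)$: it sends $X_{x^2}\mapsto X_x\mapsto X_1\mapsto0$ up to constants, and $X_{x\theta}\mapsto X_\theta\mapsto0$. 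Filter $\Lambda^{j-1}\mathfrak{osp}(1|2)$ by the degree in $x$ of the arguments. We then define $b$ on the basis $(u_2,\dots,u_j)$ inductively in this degree, so that
$$X_1\cdot b(u_2,\dots,u_j)=c(X_1,u_2,\dots,u_j)+\sum_i b(u_2,\dots,[X_1,u_i],\dots,u_j)$$
for terms of lower degree already fixed. We solve each equation by applying $I$. Next we check that $(\delta b)(X_1,u_2,\dots)=c(X_1,u_2,\dots)$, possibly after also imposing $b(X_1,\dots)=0$ in the construction. Expanding $\delta b$ with the sign conventions given shows that the terms with $u_i$ in the bracket reproduce exactly the correction sum, and the remaining terms involve $b(X_1,\dots)=0$.

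The third step is to verify the remaining identity. Define $c_1=c-\delta b$. This is a cocycle, and by construction $\iota_{X_1}c_1=0$, so $c_1$ is reduced. Moreover $b$ lies in the same class of cochains as $c$. This gives all three decompositions at once, because $I$ and the induction preserve each class.

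The main obstacle is the sign bookkeeping in the super coboundary formula when the $u_i$ are odd, namely $X_\theta$ and $X_{x\theta}$. One must check that the inductive definition of $b$ is consistent, so that the prescribed values are compatible with supersymmetry of $b$ in its arguments, and that $\delta b(X_1,\dots)$ really equals the intended expression. To handle this cleanly, I would first reduce to the Cartan identity $\iota_{X_1}\delta+\delta\iota_{X_1}=L_{X_1}$. Then I would construct $b=h(\iota_{X_1}c)$, where $h$ is a homotopy inverse to $L_{X_1}$ restricted to cochains. Such an $h$ exists because $L_{X_1}$ is $(X_1\cdot)$ minus a nilpotent operator commuting with it, so its inverse is given by the finite geometric series $h=\sum_k I\,(N I)^k$, where $N$ denotes the nilpotent part $c\mapsto\sum_i c(\dots,[X_1,u_i],\dots)$. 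I would then verify that $h$ commutes with $\iota_{X_1}$, so that $\iota_{X_1}b=0$ and the correction works. This verification is where the care is needed.
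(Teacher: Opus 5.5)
Your construction of the correcting cochain is sound, and cleaner than the paper's. Write $T$ for the pointwise action $A\mapsto X_1\cdot A$ on values and $N$ for the action of $\mathrm{ad}\,X_1$ on arguments. Then $I$, $T$, $N$ and $\iota_{X_1}$ pairwise commute ($\iota_{X_1}N=N\iota_{X_1}$ because $[X_1,X_1]=0$). The sum $h=\sum_k I^{k+1}N^k$ is finite, satisfies $(T-N)h=\mathrm{id}$, and $b=h(\iota_{X_1}c)$ gives $\iota_{X_1}b=0$ and $\iota_{X_1}\delta b=L_{X_1}b=\iota_{X_1}c$.

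The gap is in your third step. The lemma does not only say that every cocycle is cohomologous to a reduced cocycle of the same kind. It says the reduced representative lies in $Z^j_{\mathrm{red}}(\mathfrak{osp}(1|2),M)$, i.e.\ takes values in \emph{polynomial} operators. That is exactly what Theorem~\ref{scohomology} needs in order to pass from $\Dlm$ to $M$. Take $c\in\ZD{j}$ or $c\in\ZDif{j}$ with genuinely $C^\infty$ coefficients. The fact that $I$ ``preserves each class'' only tells you that $c_1=c-\delta b$ is a reduced cocycle with values in $\Dlm$; nothing you wrote shows $c_1$ is $M$-valued. Only the third equality, where $c$ already takes values in $M$, follows as you stated it.

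The missing step is short and uses your own tools. It is precisely what the paper does in degrees $1$ and $2$, through relations such as $X_1\cdot c(X_x)=0$ and $X_1^2\cdot c(X_{x^2})=0$.
\begin{itemize}
\item If $c_1$ is a reduced cocycle, then $L_{X_1}c_1=\iota_{X_1}\delta c_1+\delta\iota_{X_1}c_1=0$, i.e.\ $Tc_1=Nc_1$.
\item Since $T$ and $N$ commute, $T^kc_1=N^kc_1$ for every $k$.
\item On $\mathfrak{osp}(1|2)$ we have $(\mathrm{ad}\,X_1)^3=0$, because $X_{x^2}\mapsto 2X_x\mapsto 2X_1\mapsto 0$ and $X_{x\theta}\mapsto X_\theta\mapsto 0$. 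Hence $N^{2j+1}=0$ on $j$-cochains.
\item So $(X_1\cdot)^{2j+1}c_1(u_1,\dots,u_j)=0$. The description $M=\{A\in\Dlm : (X_1\cdot)^kA=0 \text{ for some } k\}$ then gives $c_1\in Z^j_{\mathrm{red}}(\mathfrak{osp}(1|2),M)$.
\end{itemize}

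Once this is added, your uniform homotopy is preferable to the paper's case analysis. In the super setting, $\Lambda^j\mathfrak{osp}(1|2)$ contains the symmetric powers of the odd part, so it is nonzero for every $j$; the paper itself uses cochains with $g_\alpha(B,B)\neq 0$. The paper's dismissal of $j>3$ as having ``no non-trivial $j$-cochain'' therefore does not apply, whereas your argument treats all $j$ at once.
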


\begin{proof}
Notice that the operator $X_1\cdot$ is simply the derivation operator with respect to $x$:
$$
X_1\cdot a_\alpha(x,\theta,\partial_\theta)\Omega^\alpha=a'_\alpha(x,\theta,\partial_\theta)\Omega^\alpha.
$$
Consequently, this operator is surjective, and the space $M$ is given by:
$$
M=\{A\in\mathfrak D_{{\lambda},\mu}~|~\exists\, k\in\mathbb{N}, (X_1)^k\cdot A=0\}.
$$
In particular, its kernel is: $$\ker(X_1\cdot)=\mathrm{Span}_\R\{\Omega^\alpha,\,\theta\Omega^\alpha,\,\partial_\theta\Omega^\alpha,\,\theta\partial_\theta\Omega^\alpha\}.$$

\noindent
{$\mathbf{Case~} j=0$}

If $j=0$, there is nothing to prove. Indeed, there are no coboundaries, and any $0$-cocycle $c \in \Dlm$ is differential and satisfies $X_1\cdot c = 0$. This implies that $c \in M$, yields:
$$
\ZD{0}=\ZDif{0}=\ZM{0}=\Zred{0}.
$$

\noindent
{$\mathbf{Case~} j=1$}

If $a$ is a 1-cochain, there exists $b \in \Dlm$ such that $X_1 \cdot b = a(X_1)$. 
If $a$ is a cocycle, the cocycle $c = a - \delta b$ is thus reduced (i.e., it vanishes on $X_1$). 
The cocycle relations applied to the pairs $(X_1,X_x)$, $(X_1,X_{x^2})$, $(X_1,X_{\theta})$, and $(X_1,X_{x\theta})$ then yield
$$X_1 \cdot c(X_x) = 0, \quad X_1^2 \cdot c(X_{x^2}) = 0, \quad X_1 \cdot c(X_{\theta}) = 0, \quad X_1^2 \cdot c(X_{x\theta}) = 0,$$
which implies that all values of $c$ lie in $M$. Note that $b$ is differential. Moreover, if $a$ has values in $M$, we can choose $b \in M$, which gives:
$$\aligned
\ZD{1}&=\Zred{1}+\BD{1},\\
\ZDif{1}&=\Zred{1}+\BDif{1},\\
\ZM{1}&=\Zred{1}+\BM{1}.
\endaligned
$$

\noindent
{$\mathbf{Case~} j=2$}

Now, let $a\in\ZD{2}$ be a $2$-cocycle. Modifying \(a\) by a reduced coboundary $\delta b$ satisfying the relations:
$$
\aligned
X_1\cdot b(X_x)&=a(X_1,X_x),\quad ~&X_1\cdot b(X_\theta)&=a(X_1,X_\theta),\\
X_1\cdot b(X_{x\theta})&=a(X_1,X_{x\theta})-b(X_\theta),\quad &X_1\cdot b(X_{x^2})&=a(X_1,X_{x^2})+2b(X_x).
\endaligned
$$
 Then the $2$-cocycle $c=a-\delta b$ satisfies:
$$\aligned
c(X_1,X_x)&=a(X_1,X_x)-X_1\cdot b(X_x)=0,\\
c(X_1,X_{x^2})&=a(X_1,X_{x^2})-X_1\cdot b(X_{x^2})-b([X_1,X_{x^2}])=0,\\
c(X_1,X_{\theta})&=a(X_1,X_{\theta})-X_1\cdot b(X_{\theta})=0,\\
c(X_1,X_{x\theta})&=a(X_1,X_{x\theta})-X_1\cdot b(X_{x\theta})-b(X_{\theta})=0.
\endaligned
$$
This proves that $c$ is reduced.

Now, observe that the general coboundary relation $\delta c(X_1, X_{h_1}, X_{h_2}) = 0$ simplifies to:
\begin{equation}\label{hh}
X_1 \cdot c(X_{h_1}, X_{h_2}) = c([X_1, X_{h_1}], X_{h_2}) - (-1)^{h_1h_2} c([X_1,  X_{h_2}], X_{h_1}).
\end{equation}
Therefore, the relation $\delta c(X_1,X_x,X_{x^2})=0$ reduces to: $X_1\cdot c(X_x,X_{x^2})=0$, then we have $c(X_x,X_{x^2})=\sum c_\alpha \Omega^\alpha$ (with $c'_\alpha=0$) and then $c(X_x,X_{x^2})\in M$. Similarly, the relation $\delta c(X_1,X_x,X_{x\theta})=0$ gives $X_1\cdot c(X_x,X_{x\theta})=0$. The relation $\delta c(X_1,X_\theta,X_{x\theta})=0$ yields $X_1\cdot c(X_\theta,X_{x\theta})=c(X_\theta,X_{\theta})$.  Applying $X_1$ once more yields $X^2_1 \cdot c(X_\theta,X_{x\theta}) = X_1 \cdot c(X_\theta,X_{\theta}) = 0$, therefore $c(X_\theta,X_{x\theta}) \in M$. Using \eqref{hh} and the fact that $c(X_x,X_{x^2})$ and $c(X_x,X_{x\theta})$ belong to $M$, it easily follows that $c(X_\theta,X_{x^2})$ and $c(X_{x^2},X_{x\theta})$ are also in $M$.\\

The 2-cocycle $c$ is also differential since, if $c$ is a reduced 2-cocycle, its values are completely determined by the parity of its arguments.\\

Now, let ${X}_{h_i}$ denote the even fields associated with the even functions $h_i(x)$, and ${X}_{\phi_i}$ denote the odd fields associated with the odd functions $\phi_i(x)$.
 If $a$ is a differential 2-cocycle, it decomposes explicitly into three distinct parity sectors as follows:

\begin{enumerate}
    \item {Even-Even Sector:} For any two even generators, the 2-cocycle is expressed using standard Wronskian coefficients:
    $$
    a({X}_{h_1}, {X}_{h_2}) = \sum_{\alpha} \left[ (h_1 h'_2 - h_2 h'_1) A_\alpha + (h_1 h''_2 - h_2 h''_1) B_\alpha + (h'_1 h''_2 - h'_2 h''_1) C_\alpha \right] \Omega^\alpha.
    $$

    \item {Even-Odd Sector:} For one even and one odd generator, the cross-terms yield:
    $$
    a({X}_{h}, {X}_{\phi}) = \sum_{\alpha} \left[ (h \phi' - h' \phi) A_{\alpha,\text{odd}} + (h \phi'' - h'' \phi) B_{\alpha,\text{odd}} \right] \Omega^\alpha.
    $$

    \item {Odd-Odd Sector:} Due to the anti-commutation rule of the odd elements, the signs switch to a symmetric sum:
    $$
    a({X}_{\phi_1}, {X}_{\phi_2}) = \sum_{\alpha} \left[ (\phi_1 \phi'_2 + \phi'_1 \phi_2) A_{\alpha,\text{even}} + (\phi_1 \phi''_2 + \phi''_1 \phi_2) B_{\alpha,\text{even}} \right] \Omega^\alpha.
    $$
\end{enumerate}
Then, we choose the 1-cochain $b$ to be also differential, vanishing on $X_1$, and defined by components according to parity:
$$
b({X}_{h}) = \sum_{\alpha} \left[ h' E_\alpha + h'' F_\alpha \right] \Omega^\alpha, \quad b({X}_{\phi}) = \sum_{\alpha} \left[ \phi' E_{\alpha,\text{odd}} + \phi'' F_{\alpha,\text{odd}} \right] \Omega^\alpha.
$$

A direct line-by-line computation of the super-coboundary $$\delta b({X}_u, {X}_v) = {X}_u \cdot b({X}_v) - (-1)^{uv} {X}_v \cdot b({X}_u) - b([{X}_u, {X}_v])$$ yields:
$$
\aligned
\delta b({X}_{h_1}, {X}_{h_2}) =& \sum_{\alpha} \Big[ \left|\begin{matrix}h_1&h_2\\ h'_1&h'_2\end{matrix}\right|E'_\alpha +\left|\begin{matrix}h_1&h_2\\ h''_1&h''_2\end{matrix}\right| F'_\alpha\\+& \frac{1}{2} \left|\begin{matrix}h'_1&h'_2\\ h''_1&h''_2\end{matrix}\right|\Big(\sum_{i} (\alpha_i - t_i)E_{\alpha + \varepsilon_i} + (k - |\alpha| + 1)F_\alpha \Big) \Big] \Omega^\alpha. \\
\endaligned
$$

Similarly, evaluating $\delta b$ on the remaining cross-sectors and choosing $E'_\alpha = A_\alpha$, $F'_\alpha = B_\alpha$ (and their corresponding odd counterpart parameters),
 we successfully obtain a reduced and differential 2-cocycle $c = a - \delta b$ whose coefficients are strictly polynomial. We have proved:
\begin{align*}
Z^2(\mathfrak{osp}(1|2), \mathcal{D}_{\lambda,\mu}) &= Z_{\text{red}}^2(\mathfrak{osp}(1|2), M) + B^2(\mathfrak{osp}(1|2), \mathcal{D}_{\lambda,\mu}) \\
Z_{\text{diff}}^2(\mathfrak{osp}(1|2), \mathcal{D}_{\lambda,\mu}) &= Z_{\text{red}}^2(\mathfrak{osp}(1|2), M) + B_{\text{diff}}^2(\mathfrak{osp}(1|2), \mathcal{D}_{\lambda,\mu}) \\
Z^2(\mathfrak{osp}(1|2), M) &= Z_{\text{red}}^2(\mathfrak{osp}(1|2), M) + B^2(\mathfrak{osp}(1|2), M)
\end{align*}

\subsection*{Case $j=3$}
It is also differential. If $c \in Z^3(\mathfrak{osp}(1|2), \mathcal{D}_{\lambda,\mu})$ is a 3-cocycle, and $b$ is a 2-cochain such that $b(X_1, \cdot) = 0$ and $X_1 \cdot b(u, v) = c(X_1, u, v)$, then $c = \delta b$. If $c$ has its values in $M$ and is differential, it can be written using the $3 \times 3$ block-determinants:
$$c({X}_{H_1}, {X}_{H_2}, {X}_{H_3}) = \left[ \begin{vmatrix} h_1 & h_2 & h_3 \\ h'_1 & h'_2 & h'_3 \\ h''_1 & h''_2 & h''_3 \end{vmatrix} + \theta \begin{vmatrix} \phi_1 & \phi_2 & \phi_3 \\ \phi'_1 & \phi'_2 & \phi'_3 \\ \phi''_1 & \phi''_2 & \phi''_3 \end{vmatrix} \right] \sum_{\alpha} E_\alpha \Omega^\alpha,$$
where $H_i=h_i+\theta\phi_i$
Then, the corresponding 2-cochain $b$ is also differential and takes the form:
$$b({X}_{H_1}, {X}_{H_2}) = \left[ \begin{vmatrix} h'_1 & h'_2 \\ h''_1 & h''_2 \end{vmatrix} + \theta \begin{vmatrix} \phi'_1 & \phi'_2 \\ \phi''_1 & \phi''_2 \end{vmatrix} \right] \sum_{\alpha} F_\alpha \Omega^\alpha$$
with $F'_\alpha = E_\alpha$. Consequently, the coefficients of the 2-cochain $b$ are uniquely determined and confined within the polynomial sub-module $M$. We have proved the trivialization of the spaces for degree 3:
\begin{align*}
Z^3(\mathfrak{osp}(1|2), \mathcal{D}_{\lambda,\mu}) &= B^3(\mathfrak{osp}(1|2), \mathcal{D}_{\lambda,\mu}), \\
Z_{\text{diff}}^3(\mathfrak{osp}(1|2), \mathcal{D}_{\lambda,\mu}) &= B_{\text{diff}}^3(\mathfrak{osp}(1|2), \mathcal{D}_{\lambda,\mu}), \\
Z^3(\mathfrak{osp}(1|2), M) &= B^3(\mathfrak{osp}(1|2), M).
\end{align*}

\subsection{Case $j > 3$}
There is no non-trivial $j$-cochain, and there is nothing to prove.
 \end{proof}

On the other hand,
\begin{lem}\label{coboundaries}
For each $j$, we have :
$$\aligned
\BM{j}&=\ZM{j}\cap \BD{j},\\
&=\ZM{j}\cap \BDif{j}.\\
\endaligned
$$
\end{lem}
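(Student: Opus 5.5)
The inclusions $\BM{j}\subset \ZM{j}\cap\BDif{j}\subset\ZM{j}\cap\BD{j}$ are immediate, because $M\subset\Dlm$ and a cochain with values in $M$ is automatically differential. So I only need the reverse inclusion: if $c\in\ZM{j}$ and $c=\delta b$ for some $b\in C^{j-1}(\mathfrak{osp}(1|2),\Dlm)$, then $c=\delta b_0$ for some $b_0$ with values in $M$. The idea is to adjust $b$ by a cocycle until it becomes reduced with values in $M$, using Lemma \ref{cocycles}, and then exploit the fact that $X_1\cdot$ is the derivative $\partial_x$ on the coefficients.

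For $j=0$ there is nothing to prove. For $j=1$, write $c=\delta b$ with $b\in\Dlm$. Then $X_1\cdot b=\pm c(X_1)\in M$, so $(X_1)^{k+1}\cdot b=0$ for some $k$. By the characterization
$$M=\{A\in\Dlm~|~\exists\, k,\ (X_1)^k\cdot A=0\},$$
this gives $b\in M$.

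For $j\ge2$, start with $c=\delta b$ where $b$ is a $(j-1)$-cochain in $\Dlm$.
\begin{enumerate}
\item Use Lemma \ref{cocycles} (the homotopy with $X_1$) to make $c$ reduced. Its proof builds the correcting cochain as a solution of $X_1\cdot b'=\,$(polynomial data). When that data is polynomial one may choose $b'$ polynomial, so we may replace $c$ by $c-\delta b'$ with $b'$ valued in $M$ and assume $c\in\Zred{j}$.
\item Reduce $b$ itself. Since $X_1\cdot$ is surjective on $\Dlm$, subtract from $b$ a coboundary $\delta e$ chosen so that $b(X_1,\dots)=0$. This does not change $\delta b$.
\item Evaluate $\delta b(X_1,u_2,\dots,u_j)=c(X_1,u_2,\dots,u_j)=0$ on the basis elements. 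Since $b$ vanishes on $X_1$, this gives relations of the form
$$X_1\cdot b(u_2,\dots,u_j)=\sum \pm\, b([X_1,u_i],\dots).$$
Because $\mathrm{ad}X_1$ is nilpotent on $\mathfrak{osp}(1|2)$ ($X_{x^2}\mapsto X_x\mapsto X_1$ and $X_{x\theta}\mapsto X_\theta$), ordering the arguments by their degree in $x$ gives, by induction on that degree, that some power of $X_1$ kills each value $b(u_2,\dots,u_j)$. This is the same argument as in the case $j=2$ of Lemma \ref{cocycles}. It shows $b$ takes values in $M$, hence $c\in\BM{j}$.
\end{enumerate}

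I expect the main obstacle to be step 2. Subtracting $\delta e$ to kill $b(X_1,\cdot)$ requires $\delta e$ to agree with $b$ on $X_1$ in all slots, and $e$ can be non-polynomial. This is harmless, since only $b-\delta e$ needs to be polynomial, but I must check that solving $X_1\cdot e(\dots)=b(X_1,\dots)+(\text{lower terms})$ is possible recursively in the order given by $\mathrm{ad}X_1$. I would do this exactly as the corresponding step in the case $j=2$ of Lemma \ref{cocycles}, by induction on $j$, using that the $X_1$-part of the complex is acyclic: $X_1\cdot$ is surjective, and $X_1$ acts trivially in cohomology via the Cartan formula.

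For $j>3$ (super wedge powers being symmetric in the odd variables, I would handle this case by the same reduction), the same recursion applies verbatim. The result is that any coboundary with values in $M$ is the coboundary of an $M$-valued reduced cochain.
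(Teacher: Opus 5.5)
Your proof is correct. For $j=1$ and $j=2$ it follows the paper's argument: you make the primitive $b$ reduced using surjectivity of $X_1\cdot$, then read off from the cocycle relation at $X_1$ that every value of $b$ is killed by a power of $X_1$.

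The difference is in degrees $j\geq 3$. There the paper simply refers back to the proof of Lemma \ref{cocycles}: an explicit primitive with $F'_\alpha=E_\alpha$ for $j=3$, and the assertion that there are no non-trivial cochains for $j>3$. You instead run the same reduction uniformly in every degree, and yours is the sounder route. $\Lambda^j\mathfrak{osp}(1|2)$ contains the $j$-th symmetric power of the two-dimensional odd part, so it is nonzero for every $j$. The paper's ``nothing to prove for $j>3$'' is therefore not actually available, while your triangular recursion covers all $j$ at once.

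To make step 2 precise, take $e$ itself reduced. Then $\delta e(X_1,\omega)=X_1\cdot e(\omega)-e(\mathrm{ad}X_1\,\omega)$, where $\mathrm{ad}X_1$ acts as a derivation and strictly lowers the $x$-degree of $\omega$. Terms containing $X_1$ drop out because $e$ is reduced. Surjectivity of $X_1\cdot$ then solves the equations degree by degree, so the Cartan-formula remark is not needed.

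Finally, your step 1 can be dropped; the paper does not use it for $j=2$ either. Dropping it removes your reliance on Lemma \ref{cocycles} in high degrees, where its proof has the same gap. Once $b$ is reduced, $X_1\cdot b(\omega)=c(X_1,\omega)+b(\mathrm{ad}X_1\,\omega)$. Since $c(X_1,\omega)\in M$, the induction on $x$-degree already gives $b(\omega)\in M$.
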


\begin{proof}
By definition,  we have $\BM{j}\subset \ZM{j}\cap \BD{j}$ and $\BM{j}\subset \ZM{j}\cap \BDif{j}.$ So, let us prove the inverse inclusion. This is holding if $j\geq 3$ thanks to the proof of Lemma \ref{cocycles}, this holds also if $j=0$ since the only coboundary is then 0.\\

\noindent
{$\mathbf{Case~} j=1$}

Let $c$ be in $\ZM{1}$ and $b$ in $\Dlm$ such that $c=\delta b$ (then $\delta b$ is in  $\BDif{1}=\BD{1}$). Then $X_1\cdot b=c(X_1)\in M$, therefore $b$ itself belongs to $M$, and $c$ in $\BM{1}$. This proves the lemma if $j=1$.\\

\noindent
{$\mathbf{Case~} j=2$}

Let $c$ be in $\ZM{2}$ and $b$ a 1-cochain such that $c=\delta b$. There is $a$ in $\Dlm$ such that $b(X_1)=X_1\cdot a$, we replace $b$ by the reduced 1-cochain $b'=b-\delta a$, which is differential if $b$ was, we  get:
$$\aligned
c(X_1,X_x)&=X_1\cdot b'(X_x),\\
c(X_1,X_{x^2})&=X_1\cdot b'(X_{x^2})-2b(X_x).
\endaligned
$$
This implies that $b(X_1)$ and $b(X_x)$ belong to $M$, $b\in \BM{2}$, and proves the lemma if $j=2$.\\

\end{proof}

These last two Lemmas give the proof of Theorem \ref{scohomology}.\\

 Now, let $$(-X_x,X_1,-X_{x^2},2X_\theta,X_{x\theta})=(H,X,Y,A,B)$$
 and consider the $\mathfrak{osp}(1|2)$ module $M$.  Obviously the $H$ action is diagonalized on $M$, that is we
decompose $M$  into weight spaces $M^\alpha$. That is, if $\Sigma\subset\mathbb R$ is the set of weights, then we have:
$$
M=\bigoplus_{\alpha\in\Sigma}M^\alpha,\quad Hv_\alpha=\alpha v_\alpha,~~\forall v_\alpha\in M^\alpha.
$$

If $V$ is a $H$-invariant vector subspace, then $V$ itself can be decomposed
in $V=\bigoplus_{\alpha \in \Sigma}V^\alpha$ with $V^\alpha=M^\alpha\cap V$.\\

 The following Theorem has been proven in \cite{DMB}.
\begin{thm}\label{coho}{\rm (The groups $\mathrm{H}^j(\mathfrak{osp}(1|2),M)$)}

Let us denote by $\ker A$ (respectively $\ker B$) the subspaces of $M$, kernel of the morphism $v\mapsto Av$ (respectively $v\mapsto Bv$) in $M$. Then we have the following linear isomorphisms:
\begin{itemize}
\item[\sl(i)]  $\mathrm{H}^0(\mathfrak{osp}(1|2),M)=\ker A\cap \ker B$.
\item[\sl(ii)]  $\mathrm{H}^1(\mathfrak{osp}(1|2),M)\simeq(\ker A\cap\ker B)\oplus ((\ker A)^{-\frac{1}{2}} /B((\ker A)^0))$.
\item[\sl(iii)]  $\mathrm{H}^2(\mathfrak{osp}(1|2),M)\simeq(\ker A)^{-\frac{1}{2}}/B((\ker A)^0)$.
\item[\sl(iv)] $\mathrm{H}^j(\mathfrak{osp}(1|2),M)=0$ if $j>2$.
\end{itemize}
\end{thm}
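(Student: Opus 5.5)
We prove Theorem \ref{coho} for the $\mathfrak{osp}(1|2)$-module $M$ by reducing to the odd generators $A=2X_\theta$ and $B=X_{x\theta}$, which generate $\mathfrak{osp}(1|2)$. The relations give $[A,A]=2X_1=2X$, $[B,B]=\tfrac12 X_{x^2}=-\tfrac12 Y$ and $[A,B]=X_x=-H$. The module $M$ is $H$-diagonalizable, and $X=X_1$ acts locally nilpotently by the description of $M$ in the proof of Lemma \ref{cocycles}. By Lemmas \ref{cocycles} and \ref{coboundaries}, every class has a reduced representative, so we may work with the complex of reduced cochains $C^j_{\mathrm{red}}$, i.e.\ cochains vanishing whenever one argument is $X$.

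\textbf{Step 1: reduce to $H$-weight $0$.} Since $H$ acts semisimply on $\mathfrak{osp}(1|2)$ and on $M$, the Cartan formula $L_H=\delta\iota_H+\iota_H\delta$ shows that the cohomology is carried by $H$-invariant cochains. Such a cochain maps a tuple of total weight $w$ into $M^{-w}$.

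\textbf{Step 2: describe reduced invariant cochains.} A reduced invariant cochain is determined by its values on $\Lambda^j\langle H,Y,A,B\rangle$ in the super sense, where the odd generators $A,B$ may be repeated. The cocycle conditions on triples containing $X$ force further relations. Using $[X,B]=A/2$-type relations (up to normalisation), $[X,Y]=2H$, and the reducedness, one expresses the values involving $H$, $Y$ and $A$ through values involving $B$ only, plus terms killed by $A$. For $j=0$ this directly gives invariants $=\ker A\cap\ker B$, since the even part is generated by brackets of odd elements; this is (i).

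\textbf{Step 3: degree one.} A reduced $1$-cocycle $c$ satisfies $c(X)=0$. The pairs $(X,A)$ and $(A,A)$ give $Ac(A)=0$ and $X c(A)=0$. Weight considerations put $c(A)\in(\ker A)^{-1/2}$-type components, and $c(B)$ is then determined up to the ambiguity coming from $\delta$ of weight-$0$ elements. One thus identifies the space of reduced cocycles modulo reduced coboundaries $\delta v$, with $Xv=0$, as an extension: one summand is $\ker A\cap\ker B$, coming from cocycles built from invariant vectors as in \cite{aba}, and the other is $(\ker A)^{-1/2}/B((\ker A)^0)$. The coboundaries $B(\ker A)^0$ appear because $\delta v(A)=Av=0$ and $\delta v(B)=Bv$ for $v\in(\ker A)^0$.

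\textbf{Step 4: degree two and higher.} We filter $M$ by the nilpotency order of $X$ and use the Hochschild--Serre spectral sequence relative to the subalgebra $\langle X,A\rangle$. We expect this to be the main obstacle. The first task is to show that the cohomology of $\langle X,A\rangle$ with values in $M$ is concentrated in degree $0$, equal to $\ker A$; this uses that $X=\tfrac12[A,A]$ is surjective on $M$ with locally nilpotent action. The spectral sequence then collapses to a two-term complex $\ker A\to\ker A$ induced by $B$ and weights. Its $\mathrm{H}^2$ is the cokernel $(\ker A)^{-1/2}/B((\ker A)^0)$, giving (iii), and all terms vanish above degree $2$, giving (iv). The delicate points are the sign bookkeeping for repeated odd arguments and checking that the differentials match $B$ restricted between the weight spaces $0$ and $-\tfrac12$.
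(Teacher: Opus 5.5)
The paper does not prove Theorem \ref{coho}; it quotes it from \cite{DMB}. Judged on its own, your proposal has a genuine gap exactly where (iii) and (iv) live, in Step~4. Step~3 also rests on a sign error.

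\textbf{Step 4.} The part you call the first task is correct and easy. For any finite-dimensional $V$, $X=A^2$ is surjective on $V\otimes M$ (because $\mathrm{ad}\,X$ is nilpotent), hence so is $A$. Then $\mathrm{H}^q(\langle X,A\rangle,V\otimes M)=\mathrm{H}^q(\langle A\rangle,\ker X)=0$ for $q>0$, because $\ker A=A(\ker X)$. You need this for the coefficients $\mathrm{Hom}(\Lambda^p(\mathfrak{g}/\langle X,A\rangle),M)$, not only for $M$. Hochschild--Serre then gives $\mathrm{H}^*(\mathfrak{osp}(1|2),M)\cong \mathrm{H}^*(\mathfrak{osp}(1|2),\langle X,A\rangle;M)$.

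This relative complex is not a two-term complex. The quotient $\mathfrak{g}/\langle X,A\rangle$ is spanned by $\bar H,\bar Y$ (even) and $\bar B$ (odd). So $\Lambda^p$ is $4$-dimensional for every $p\geq 2$, with basis $\bar B^p,\bar H\bar B^{p-1},\bar Y\bar B^{p-1},\bar H\bar Y\bar B^{p-2}$. As a module over $\langle X,A\rangle$ it is isomorphic to $\mathbb{R}[A]/(A^3)\oplus\mathbb{R}$. Hence the $H$-invariant relative $p$-cochains are $(\ker A^3)^{-(p+1)/2}\oplus(\ker A)^{-p/2}$. For the polynomial module this is nonzero in arbitrarily high degree as $d$ grows; for example $a_{0,\alpha}\in(\ker A)^{-p/2}$ when $|\alpha|=d-p/2$. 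Proving that this complex is acyclic in degrees $\geq 3$, and computing its degree-$2$ cohomology, is the whole content of (iii)--(iv). Saying it ``collapses \dots\ induced by $B$ and weights'' only asserts the conclusion.

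The shape you assert is also inconsistent with the statement. Put $K=\ker A\cap\ker B$ and $C=(\ker A)^{-1/2}/B((\ker A)^0)$. Then (i)--(iii) read $K$, $K\oplus C$, $C$. This is the pattern of an exterior algebra on one degree-one class, tensored with $K$ in degree $0$ and $C$ in degree $1$. That extra class comes from the Cartan direction $H\notin\langle X,A\rangle$; in degree $1$ the $K$-part consists exactly of the cocycles with $c(H)\neq 0$.

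The complex $(\ker A)^0\xrightarrow{B}(\ker A)^{-1/2}$ has kernel $K$ and cokernel $C$. If you place it so that its $\mathrm{H}^2$ is $C$, its $\mathrm{H}^1$ is $K$ alone, which contradicts (ii) and your own Step~3. You must also split off $H$ (for instance by working relative to $\langle H,X,A\rangle$). Then show the remaining relative cohomology is $K$ in degree $0$, $C$ in degree $1$ and zero above, and check that the spectral sequence degenerates.

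\textbf{Steps 1 and 3.} We have $[H,X]=X$, $[H,A]=\tfrac12 A$, $[H,B]=-\tfrac12 B$, and $X$ raises $H$-weights on $M$. So an $H$-invariant cochain sends a tuple of total weight $w$ into $M^{w}$, not $M^{-w}$. Consequently $c(A)$ lies in $(\ker A)^{1/2}=A((\ker X)^0)$. It is always removed by a reduced coboundary $\delta v$ with $v\in(\ker X)^0$, and it does not carry the $(\ker A)^{-1/2}$-part.

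After that, the cocycle is determined by $w=c(B)\in M^{-1/2}$. The conditions on $(A,B)$ and $(B,B)$ give $c(H)$ and $c(Y)$ as multiples of $Aw$ and $Bw$. The remaining constraints are $Xw=0$ and $BAw=0$, and $w$ is taken modulo $B((\ker A)^0)$. The map $w\mapsto Aw$ is onto $K$ (lift using the surjectivity of $A$), with kernel $(\ker A)^{-1/2}$. This exact sequence yields (ii), and it is the argument Step~3 needs; ``cocycles built from invariant vectors as in \cite{aba}'' does not supply it.

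You also do not need Lemmas \ref{cocycles}--\ref{coboundaries} in all degrees. Their argument for $j>3$ ignores repeated odd arguments, whereas your Hochschild--Serre vanishing already produces reduced representatives.
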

Recall that the module $\mathfrak{D}_{\lambda,\mu}$  of multi-differential operators is spanned, as $C^\infty(\mathbb{R})$-module,  by the operators $\Omega^\alpha$, $\theta\partial_\theta\Omega^\alpha$,
$\theta\Omega^\alpha$ and $\partial_\theta\Omega^\alpha$, where $\alpha=(\alpha_1,\dots,\alpha_n)\in\mathbb{N}^n$ and $\Omega^\alpha$ is the operator defined by:
 $$
\Omega^\alpha(f_1\omega^{\lambda_1}\otimes\cdots\otimes f_n\omega^{\lambda_n})={1\over\alpha!}f_1^{(\alpha_1)}\dots f_n^{(\alpha_n)}\omega^{\mu}.
$$
We put here $d=\mu-|\lambda|$  and, according to Theorem \ref{scohomology} we restrict ourselves to the submodule $M$ of polynomial operators  which admits the following basis as $\mathbb{R}$-vectorial space:
$$
a_{m,\a}=x^m\Omega^\alpha,~~b_{m,\a}=x^m\theta\partial_\theta\Omega^\alpha,~~c_{m,\a}=x^m\theta\Omega^\alpha,
$$
$$
d_{m,\a}=Ab_{m,\a}=
x^m\partial_\theta\Omega^\alpha-x^m\theta\sum_{i=1}^n\Omega^{\alpha+\varepsilon_i},
$$
where $(\varepsilon_1,\,\dots,\,\varepsilon_n)$ is the canonical basis of $\mathbb{R}^n$. We say that $a_{m,\a}$ and $b_{m,\a}$ are even vectors (see below) and $c_{m,\a}$ and $d_{m,\a}$ are odd vectors.
In fact they are weight vectors for the action of $H$:
$$
\begin{array}{llllll}
Ha_{m,\a}&=(|\alpha|-d-m)a_{m,\a}\quad
&Hb_{m,\a}&=(|\alpha|-d-m)b_{m,\a}\\
Hc_{m,\a}&=(|\alpha|-d-m-\frac{1}{2})c_{m,\a}\quad
&Hd_{m,\a}&=(|\alpha|-d-m+\frac{1}{2})d_{m,\a}.
\end{array}
$$
\\

Similarly, a direct computation give the following relations for the $A$ and $B$ actions on these vectors :
$$
\begin{array}{llllll}
A~ a_{m,\a}&=mc_{m-1,\a},
&A~ b_{m,\a}&=d_{m,\a},\\
A~ c_{m,\a}&=a_{m,\a},
&A~ d_{m,\a}&=mb_{m-1,\a}
\end{array}
$$
and
$$
\begin{array}{lllll}
B a_{m,\a}&=(m-2|\a|+2d)c_{m,\a}-\sum_{i=1}^n\a_id_{m,\a-\varepsilon_i},\\
B b_{m,\a}&=d_{m+1,\a}-(|\a|+2|\l|)c_{m,\a},\\
B c_{m,\a}&=a_{m+1,\a}+\sum_{i=1}^n\a_ib_{m,\a-\varepsilon_i},\\
B d_{m,\a}&=(m-2|\a|+2d-1)b_{m,\a}+(|\a|+2|\l|)a_{m,\a}.
\end{array}
$$
From these formulas (or directly), we can compute the $X$ and $Y$ actions, getting:
$$
\begin{array}{llll}
Xa_{m,\a}&=ma_{m-1,\a},&Xb_{m,\a}&=mb_{m-1,\a},\\
Xc_{m,\a}&=mc_{m-1,\a},&Xd_{m,\a}&=md_{m-1,\a},
\end{array}
$$
and
$$
\begin{array}{ll}
Ya_{m,\a}&=(2|\a|-2d-m)a_{m+1,\a}+\displaystyle\sum_{i: \a_i>0}(\alpha_i+2\l_i-1)a_{m,\a-\varepsilon_i}+\sum_{i: \a_i>0}b_{m,\a-\varepsilon_i},\\
Yb_{m,\a}&=(2|\a|-2d-m)b_{m+1,\a}+\displaystyle\sum_{i: \a_i>0}(\alpha_i+2\l_i)b_{m,\a-\varepsilon_i},\\
Yc_{m,\a}&=(2|\a|-2d-m-1)c_{m+1,\a}+\displaystyle\sum_{i: \a_i>0}(\alpha_i+2\l_i-1)c_{m,\a-\varepsilon_i},\\
Yd_{m,\a}&=(2|\a|-2d-m+1)d_{m+1,\a}+\displaystyle\sum_{i: \a_i>0}(\alpha_i+2\l_i)d_{m,\a-\varepsilon_i}-(2|\a|+1+2|\l|)c_{m,\a}.
\end{array}
$$
\\

From these formulas, we immediately get
$$
\ker A={\rm Span}\{a_{0,\a}, d_{0,\a}\}.
$$
$$
\ker A\cap \ker B=~\left\{\begin{array}{ll}
{\rm Span}(a_{0,0})~&\text{ if }~ \mu=|\l|,\\
{\rm Span}\{d_{0,\a}~:~|\a|=-2|\l|\}~&\text{ if }~ \mu=-|\l|+\frac{1}{2},\\
0~~\text{elsewhere.}
\end{array}\right.
$$
and
$$
(\ker A)^{-\frac{1}{2}}=
{\rm Span}\left\{a_{0,\a}~:~ |\a|=\mu-|\l|-\tfrac{1}{2}\right\}\cup\left\{d_{0,\a}~:~ |\a|=\mu-|\l|-1\right\}.
$$
Moreover, if $\mu=-|\l|-\frac{1}{2}$, $(\ker A)^0={\rm Span}(d_{0,\a})$, $B((\ker A)^0)={\rm Span}(a_{0,\a})$ if $|\a|\neq-2|\l|$, 0 if it is not the case. Similarly,
if $|\a|=\mu-|\l|-1$, then
$$
(\ker A)^0={\rm Span}\left\{a_{0,\beta}~:~|\beta|=\mu-|\l|\right\}
$$
and
$$
B((\ker A)^0)={\rm Span}\left\{\sum_{i=1}^n\beta_id_{0,\beta-\varepsilon_i}~:~|\beta|=\mu-|\l|\right\}={\rm Span}\left\{d_{0,\a}~:~ |\a|=\mu-|\l|-1\right\}.
$$

The main result in this paper is the following:

\begin{thm}\label{principal}~~

The dimensionalities for the cohomology groups $\mathrm{H}^j(\mathfrak{osp}(1|2),\mathfrak D_{\lambda,\mu})$ are:
$$\aligned
\text{\sl(i)}&\quad \mathrm{dim}(\mathrm{H}^j(\mathfrak{osp}(1|2),\mathfrak D_{\lambda,\mu}))=~0~\text{ if }~2d\notin\mathbb{N}~~\text{for any}~j\geq0.\\
\text{\sl(ii)}&\quad \mathrm{dim}(\mathrm{H}^0(\mathfrak{osp}(1|2),\mathfrak D_{\lambda,\mu}))=~\left\{\begin{array}{ll}
1&~\text{ if }~d=0,\\
\begin{pmatrix}n+k-1\\k\end{pmatrix}&~\text{ if }~d-{1\over2}=-2|\l|=k\in\mathbb{N},\\
0&~\text{ in the other cases}.\end{array}\right.\\
\text{\sl(iii)}&\quad\mathrm{dim}(\mathrm{H}^1(\mathfrak{osp}(1|2),\mathfrak D_{\lambda,\mu}))=~\left\{\begin{array}{ll}
1&~\text{ if }~ d=0,\\
2\begin{pmatrix}n+k-1\\k\end{pmatrix}&~\text{ if }~d-{1\over2}=-2|\l|=k\in\mathbb{N},\\
0&~\text{ in the other cases}.\end{array}\right.\\
\text{\sl(iv)}&\quad\mathrm{dim}(\mathrm{H}^2(\mathfrak{osp}(1|2),\mathfrak D_{\lambda,\mu}))=~\left\{\begin{array}{ll}
\begin{pmatrix}n+k-1\\k\end{pmatrix}&~\text{ if }~d-{1\over2}=-2|\l|=k\in\mathbb{N},\\
0&~\text{ in the other cases}.\end{array}\right.\\
\text{\sl(v)}&\quad\mathrm{dim}(\mathrm{H}^j(\mathfrak{osp}(1|2),\mathfrak D_{\lambda,\mu}))=0~~\text{for any}~j\geq3.\endaligned
$$

\end{thm}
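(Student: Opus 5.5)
By Theorem \ref{scohomology} we may replace $\mathfrak D_{\lambda,\mu}$ by the polynomial submodule $M$, so it suffices to compute the dimensions of the spaces listed in Theorem \ref{coho}. Part (v) is then immediate from Theorem \ref{coho}(iv). For the other parts, Theorem \ref{coho} reduces everything to two finite-dimensional linear-algebra problems on weight spaces:
\begin{itemize}
\item the space $\ker A\cap\ker B$;
\item the quotient $(\ker A)^{-\frac12}/B((\ker A)^0)$.
\end{itemize}
Both are read off from the explicit action of $A$, $B$, $H$ on the basis $a_{m,\a},b_{m,\a},c_{m,\a},d_{m,\a}$.

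First I would use $\ker A={\rm Span}\{a_{0,\a},d_{0,\a}\}$ together with the weights $Ha_{0,\a}=(|\a|-d)a_{0,\a}$ and $Hd_{0,\a}=(|\a|-d+\frac12)d_{0,\a}$. Every weight occurring in $\ker A$ lies in $-d+\frac12\mathbb Z$. The weights $0$ and $-\frac12$ therefore occur only if $2d\in\mathbb Z$, and then, since $|\a|\ge0$, only if $2d\in\mathbb N$. The same holds for $\ker A\cap\ker B$, because $H$ commutes with the brackets up to scalars and preserves these kernels. Indeed, $H^0$ is contained in the weight-$0$ part of $\ker A$, since $H$ is proportional to $[A,B]$. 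This gives (i) for $j=0,1,2$, and together with (v) for all $j$.

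Next, for (ii), I would compute $\ker A\cap\ker B$ by applying the formulas for $B$ to a general combination $\sum x_\a a_{0,\a}+\sum y_\a d_{0,\a}$ of fixed weight $0$. We have
\[
Ba_{0,\a}=(2d-2|\a|)c_{0,\a}-\sum_i\a_i d_{0,\a-\varepsilon_i}.
\]
Requiring $B$ to vanish on the $a$-part forces $\a=0$ and $d=0$, giving ${\rm Span}(a_{0,0})$. We also have
\[
Bd_{0,\a}=(2d-2|\a|-1)b_{0,\a}+(|\a|+2|\l|)a_{0,\a},
\]
and on the $d$-part, weight $0$ means $|\a|=d-\frac12$. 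Then $2d-2|\a|-1=0$ automatically, and the condition becomes $|\a|=-2|\l|$. So with $k=d-\frac12=-2|\l|\in\mathbb N$, the kernel is spanned by the $d_{0,\a}$ with $|\a|=k$. Their number is the number of multi-indices of length $k$ in $n$ variables, namely $\binom{n+k-1}{k}$. This matches the paper's computation of $\ker A\cap\ker B$ and gives (ii).

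Then, for (iv), I would compute $(\ker A)^{-\frac12}/B((\ker A)^0)$. The weight $-\frac12$ part is spanned by the $a_{0,\a}$ with $|\a|=d-\frac12$ and the $d_{0,\a}$ with $|\a|=d-1$. The weight $0$ part is spanned by the $a_{0,\beta}$ with $|\beta|=d$ and the $d_{0,\beta}$ with $|\beta|=d-\frac12$. These two weight spaces are never simultaneously populated by both types: if $d\in\mathbb N$ only the $a_{0,\beta}$ and $d_{0,\a}$ appear, and if $d\in\frac12+\mathbb N$ only the $d_{0,\beta}$ and $a_{0,\a}$ appear.
\begin{itemize}
\item \emph{Integer $d$.} Here $B a_{0,\beta}=-\sum_i\beta_i d_{0,\beta-\varepsilon_i}$ (the $c$-term vanishes since $|\beta|=d$). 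The $d_{0,\a}$ with $|\a|=d-1$ are all hit, because $d_{0,\a}$ is, up to a nonzero factor, the image of $a_{0,\a+\varepsilon_1}$ modulo the others. Concretely, the map $\beta\mapsto\sum_i\beta_i e_{\beta-\varepsilon_i}$ is the Euler-type derivation $\sum\partial_i$ on homogeneous polynomials, which is surjective onto degree $d-1$. So the quotient is $0$.
\item \emph{Half-integer $d$.} Here $Bd_{0,\beta}=(|\beta|+2|\l|)a_{0,\beta}$, so the quotient is spanned by the $a_{0,\a}$ with $|\a|=k=d-\frac12$, and it vanishes exactly when $k\neq-2|\l|$. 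When $k=-2|\l|$ it has dimension $\binom{n+k-1}{k}$.
\end{itemize}
This gives (iv). Finally, (iii) follows from Theorem \ref{coho}(ii) as the sum of the dimensions in (ii) and (iv).

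The main obstacle I expect is the integer-$d$ surjectivity in (iv), and also checking carefully that no mixed combination of $a$'s and $d$'s can lie in $\ker B$. I would handle the latter by comparing the $b$, $c$ and $a$ components of $B(\cdot)$ separately, using the linear independence of the basis.
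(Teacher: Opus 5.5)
Your route is the paper's: you pass to $M$ via Theorem~\ref{scohomology}, apply Theorem~\ref{coho}, and read $\ker A\cap\ker B$ and $(\ker A)^{-\frac12}/B((\ker A)^0)$ off the displayed action. Given those formulas, parts (i) and (v) and both computations in (iv) are fine. The step that fails is in (ii), namely ``requiring $B$ to vanish on the $a$-part forces $\alpha=0$ and $d=0$.'' On the weight-zero piece $\mathrm{Span}\{a_{0,\alpha}:|\alpha|=d\}$ the coefficient $2d-2|\alpha|$ of $c_{0,\alpha}$ vanishes. So $B$ acts by $a_{0,\alpha}\mapsto-\sum_i\alpha_i d_{0,\alpha-\varepsilon_i}$, i.e.\ by $-\sum_i\partial_{t_i}$ from degree-$d$ to degree-$(d-1)$ homogeneous polynomials in $t_1,\dots,t_n$. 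This is exactly the map whose surjectivity you use in (iv), and it cannot also be injective unless $n=1$. For $n\ge2$ and $d\ge1$ its kernel has dimension $\binom{n+d-2}{d}$. For instance, when $d=1$ the vector $a_{0,\varepsilon_1}-a_{0,\varepsilon_2}$ lies in $\ker A\cap\ker B$. Taken at face value, the formulas therefore add $\binom{n+d-2}{d}$ to $\mathrm H^0$ and $\mathrm H^1$ for every $d\in\mathbb N$, and the dimensions in (ii)--(iii) do not follow. Your argument closes up only for $n=1$, where it recovers \cite{DMB}.

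The paper makes the same unproved assertion about $\ker A\cap\ker B$, so you have inherited this error rather than introduced it. No sharper argument can rescue it, for three reasons.
\begin{enumerate}
\item The displayed action is inconsistent for $n\ge2$. Since $[X_{x\theta},X_{x\theta}]=\frac12X_{x^2}$, one must have $Y=-4B^2$. Yet at $d=1$, $B$ kills $a_{0,\varepsilon_1}-a_{0,\varepsilon_2}$ while the $Y$-formula sends it to $2(\lambda_1-\lambda_2)a_{0,0}$.
\item The cause is that the span of $\Omega^\alpha,\theta\Omega^\alpha,\partial_\theta\Omega^\alpha,\theta\partial_\theta\Omega^\alpha$ is neither the space of all $n$-ary operators nor $\mathfrak{osp}(1|2)$-stable. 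A direct computation shows that $X_{x\theta}\cdot\Omega^{\varepsilon_1}$ sends $F_1\otimes F_2$ to $d\,\theta F_1'F_2-\tfrac12 D(F_1)F_2$, and its term $-\tfrac12(\partial_\theta F_1)F_2$ lies outside that span.
\item Statement (ii) itself fails for $n=2$. Take $\lambda=(0,\lambda_2)$ with $\lambda_2\neq0$ and $\mu=\lambda_2+\frac12$. The operator $F\otimes G\mapsto\overline D(F)\,G$ is $\mathfrak{osp}(1|2)$-invariant: $[X_F,\overline D]=-\frac12F'\overline D$ makes $\overline D:\mathfrak F_0\to\mathfrak F_{1/2}$ equivariant, and multiplication of densities is equivariant. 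But (ii) predicts $\mathrm H^0=0$ here, since $d=\frac12$ and $-2|\lambda|\neq0$.
\end{enumerate}
To prove anything for $n\ge2$ you would have to recompute the action on a genuine basis of $n$-ary operators, with $\partial_\theta$ acting on each argument separately. The resulting dimensions will differ from the stated ones.
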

We refind here the results of \cite{DMB} for the $n=1$.\\

To provide a complete description of these cohomology spaces, we present below explicit expressions for some generating $j$-cocycles of these spaces.
\begin{itemize}
\item [\sl(i)] If $d-\frac{1}{2}=-2|\l|=k\in \mathbb{N}$ ($\mu= -|\l|+{1\over2}$) then
$$\mathrm{H}^0\left(\mathfrak{osp}(1|2),\mathfrak D_{\lambda, -|\l|+{1\over2}}\right)=
\mathrm{Span}\{\partial_\theta\Omega^\alpha-\theta\sum_{i=1}^n\Omega^{\alpha+\varepsilon_i}~:~|\a|=k\}$$  and if $d=0$ then
$\mathrm{H}^0(\mathfrak{osp}(1|2),\mathfrak D_{\lambda,|\l|})=\mathrm{Span}(\Omega^0)$.
\item [\sl(ii)] If $d=0$ ($\mu=|\l|$) then the space $\mathrm{H}^1(\mathfrak{osp}(1|2),\mathfrak D_{\lambda,|\l|})$ is even and it is spanned by the cohomology class of the reduced
1-cocycle $h$ defined by:
$$
h(X)=h(A)=0,\quad h(H)=-\Omega^0,\quad h(B)=\theta\Omega^0 \quad\text{and}\quad h(Y)=-2x \Omega^0.
$$
If $d-\frac{1}{2}=-2|\l|=k\in \mathbb{N}$ ($\mu=-|\l|+{1\over2}$) then the space $\mathrm{H}^1\left(\mathfrak{osp}(1|2),\mathfrak D_{\l, -|\l|+{1\over2}}\right)$ is odd and it is
 spanned by the cohomology classes of the odd reduced 1-cocycles $f_{\a}$ and $\widetilde{f}_{\a}$, where $|\a|=k$,  defined respectively by:
$$
f_{\a}(X)=f_{\a}(A)=0,\quad f_{\a}(H)=\partial_\theta\Omega^\alpha-\theta\sum_{i=1}^n\Omega^{\alpha+\varepsilon_i},
$$
$$ f_{\a}(B)=\theta \partial_\theta\Omega^\a\,\,\text{and}\,\,
f_{\a}(Y)=2x f_{\a}(H),
$$
$$\widetilde{f}_{\a}(X)=\widetilde{f}_{\a}(A)=\widetilde{f}_{\a}(H)=0,\quad\widetilde{f}_{\a}(B)=\Omega^\a,
$$
$$
\widetilde{f}_{\a}(Y)=-2\sum_{i=1}^n\a_i\partial_\theta\Omega^{\a-\varepsilon_i}+2\theta(k+1)\Omega^\a.
$$
\item [\sl(iii)] If $d-{1\over2}=-2|\l|=k\in\mathbb{N}$, then the space $\mathrm{H}^2\left(\mathfrak{osp}(1|2),\mathfrak D_{\l, -|\l|+{1\over2}}\right)$ is odd and it is spanned by the
cohomology classes of the odd reduced 2-cocycles $g_{\a}$, where $|\a|=k$,  defined by:
$$
\begin{array}{llll}
g_{\a}(A,U)&=g_{\a}(X,U)=0&~~~~\text{for any } U\in\mathfrak{osp}(1|2),\\
g_{\a}(H,B)&=\Omega^\a,~~\quad g_{\a}(B,B)&=B\Omega^\a,\\
g_\a(H,Y)&=B\Omega^\a,\quad g_\a(Y,B)&=2B^2\Omega^\a.
\end{array}
$$
\end{itemize}



\begin{thebibliography}{99}
\bibitem{DMB}
Arnal D., Ben Ammar M. and Dali B.,{\it The spaces $H^n(\mathfrak{osp}(1|2);M)$ for some modules $M$}, J. Math. Phys. 51 (2010), no. 9, 093517, 15 pp.\\

\bibitem{aba} D. Arnal and M. Ben Ammar, \emph{$\mathfrak{sl}(2)$-cohomology on multidifferential
operators space}, arXiv:2311.09199v3, accepted to be published in Monatshefte f\"ur Mathematik.\\

\bibitem{ab}
Agrebaoui B and Ben Fraj N, On the cohomology of the Lie
superalgebra of contact vector fields on $S^{1|1}$, {\it Bell.
Soc. Roy. Sci. Li\`ege} {\bf 72}, 6, 2004, 365--375.\\


\bibitem{bab}
Basdouri I, Ben Ammar M, Cohomology of $\mathfrak {osp}(1|2)$
acting on linear differential operators on the supercircle
$S^{1|1}$. Letters in Mathematical Physics (2007) 81:239--251.\\

 
 \bibitem{bbbbk}
Basdouri I, Ben Ammar M, Ben Fraj N, Boujelbene M, Kammoun
K, {\it Cohomology of the Lie superalgebra of contact vector fields on
$\mathbb{R}^{1|1} $ and deformations of the superspace of symbols},
 JNMP vol. 16, No. 4 (2009)1-37.\\
 
 \bibitem{bhm} 
I. Basdouri, S. Hammami and O. Messaoud, Second cohomology space of sl(2) acting on the space of
bilinear bidifferential operators, Revista de la Unión Matemática Argentina {\bf61} (2020), no. 1, 131–143.\\

 \bibitem{be}
Basdouri O. and Nasri E., {\it $\mathfrak{sl}(2)$-invariant 3-ary differential operators cohomology of 
$\mathfrak{sl}(2)$ acting on 3-ary differential operators}, Int. J. Geom. Methods Mod. Phys. 14, 1750015 (2017)
 [17 pages], https://doi.org/10.1142/S0219887817500153\\


\bibitem{bs}
Ben Ammar M. and Sidaoui R., {\it Cohomology of $\frak {sl}(2)$ acting on the space of $n$-ary differential operators on $\mathbb{R}$}, Indian J. Pure Appl. Math. 52, 1263-1275 (2021). https://doi.org/10.1007/s13226-021-00012-z\\

\bibitem{b}
Bouarroudj S, {\it Cohomology of the vector fields Lie algebras
on $\mathbb{R}\mathbb{P}^1$ acting on bilinear differential
operators}, International Journal of Geometric Methods in Modern Physics
 (2005), {\bf 2}; N 1,  23-40.\\
 
 \bibitem{bo}
Bouarroudj S. and Ovsienko V., {\it Three cocycles on Diff($S^1$) generalizing the Schwarzian derivative},  Internat. Math. Res. Notices {\bf 1} (1998), 25--39.\\


\bibitem{ff}
Feigin B. L. and Fuchs D. B., {\it  Homology of the Lie algebras of vector fields on the line},  Func. Anal. Appl., 14 (1980), 201--212.\\

\bibitem{Fu}
Fuchs  D. B., {\it Cohomology of infinite-dimensional Lie algebras}, Plenum Publ. New York, 1986.\\

\bibitem{g}
Gargoubi H, {\it Sur la g\'eom\'etrie de l'espace des op\'erateurs
diff\'erentiels lin\'eaires sur $\mathbb{R}$}, Bull. Soc. Roy. Sci.
Li\`ege. Vol. 69, 1, 2000, 21–47.\\


\bibitem{lec}
Lecomte B P A, {\it On the cohomology of
$\frak{sl}(n + 1;\mathbb{R})$ acting on differential operators and
$\frak{sl}(n + 1;\mathbb{R})$-equivariant symbols,} Indag. Math. NS.
11 (1), (2000), 95 114.\\

\bibitem{r1}
A. Nijenhuis,  R.W. Richardson, {\it Cohomology and deformations
in graded Lie algebras}. Bull. Amer. Math. Soc., 72 (1966), 1-–29.

\end{thebibliography}
\end{document}